\newtheorem{theorem}{Theorem}[section]
\newtheorem{lemma}[theorem]{Lemma}
\newtheorem{proposition}[theorem]{Proposition}
\theoremstyle{definition}
\newtheorem{definition}[theorem]{Definition}
\newtheorem{remark}[theorem]{Remark}
\numberwithin{equation}{section}
\newcommand{\nc}{\newcommand}
\nc{\ngo}{\mathfrak{n} }  \nc{\ggo}{\mathfrak{g} }  \nc{\sog}{\mathfrak{so} } \nc{\glg}{\mathfrak{gl} } \nc{\lca}{\mathcal{L}} \nc{\vp}{\varphi}
\nc{\dpar}{\frac{\partial}{\partial t}} \nc{\Gl}{\mathrm{GL}} \nc{\RR}{{\Bbb R}} \nc{\la}{\langle} \nc{\ra}{\rangle} \nc{\ip}{\langle\cdot,\cdot\rangle}
\nc{\lb}{[\cdot,\cdot]} \nc{\Diff}{\operatorname{Diff}} \nc{\vol}{\operatorname{vol}} \nc{\Der}{\operatorname{Der}} \nc{\Aut}{\operatorname{Aut}}
\nc{\End}{\operatorname{End}} \nc{\Ricci}{\operatorname{Ric}} \nc{\tr}{\operatorname{tr}} \nc{\Diag}{\operatorname{Diag}}
\author{Marina Nicolini}
\address{Universidad Nacional de C\'ordoba\\ FaMAF and CIEM\\ 5000 C\'ordoba\\ Argentina.} \email{mnicolini@famaf.unc.edu.ar}
\thanks{Partially supported by a CONICET doctoral fellowship, a Consejo Interuniversitario Nacional fellowship and a research grant from CONICET}
\begin{document}

\title[Laplacian solitons on nilpotent Lie groups]{Laplacian solitons on nilpotent Lie groups}

\begin{abstract}
We investigate the existence of closed $G_2$-structures which are solitons for the Laplacian flow on nilpotent Lie groups. We obtain that seven of the
twelve Lie algebras admitting a closed $G_2$-structure do admit a Laplacian soliton. Moreover, one of them admits a continuous family of Laplacian solitons
which are pairwise non-homothetic and the Laplacian flow evolution of four of them is not diagonal.
\end{abstract}

\maketitle

\section{Introduction}
A closed $G_2$-structure $\vp$ on a $7$-manifold $M$ is said to be a \emph{Laplacian soliton} if
\begin{equation}\label{sol}
\Delta_\vp \vp = c \vp + \lca_X \vp,
\end{equation}
for some $c\in\RR$ and vector field $X$ on $M$, where $\Delta_\vp$ is the Hodge Laplacian on forms defined by $\vp$ and $\lca_X$ denotes the Lie
derivative. Laplacian solitons are also characterized as the $G_2$-structures that evolves self-similarly  under the Laplacian flow $\dpar\vp(t) =
\Delta_{\vp(t)}\vp(t)$ introduced by Bryant in \cite{Bry} (see \cite{Lty} for further information).

For left-invariant $G_2$-structures on a simply connected Lie group $G$, one has the following `algebraic' versions of Laplacian solitons (see \cite{L2}):
a \emph{semi-algebraic soliton} is a Laplacian soliton for which the field $X$ is defined by the one-parameter subgroup of automorphisms of $G$ associated
to some derivation $D$ of the Lie algebra $\ggo$ of $G$. If $D^t$ is also a derivation, then it is called an \emph{algebraic soliton}, which is known to be
equivalent to evolve `diagonally' under the Laplacian flow (see \cite[Theorem $4.10$]{L2}).

Conti and Fern\'andez proved in \cite{CF} that there are, up to isomorphism, twelve $7$-dimen\-sion\-al nilpotent Lie algebras that admit a left-invariant
closed $G_2$-structure. On the other hand, Fern\'andez, Fino and Manero studied in \cite{FFM} the existence of left-invariant closed $G_2$-structures
defining a Ricci soliton metric among the Lie algebras given in \cite{CF}. It is also natural to ask which of these twelve Lie algebras admit a closed
Laplacian soliton. In this paper, we find a closed Laplacian soliton on each of the first seven Lie algebras. Our main result is summarized as follows.

\begin{theorem}\label{teo} For each $i=1,\dots,7$, let $\ngo_i$ be the Lie algebra given in Table \ref{nl}.
\begin{itemize}
\item[(i)] $\ngo_1$ and $\ngo_2$ admit an algebraic soliton (see Table \ref{n2-n3}).
\item[(ii)] $\ngo_3$ admits a pairwise non-homothetic one-parameter family of algebraic solitons (see Table \ref{n2-n3}).
\item[(iii)] Each of $\ngo_4, \ngo_5, \ngo_6, \ngo_7$ does admit a semi-algebraic soliton which is not algebraic (see Table \ref{n4-n5} and Table \ref{n6-n7}).
\end{itemize}
\end{theorem}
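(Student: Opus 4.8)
The plan is to turn the analytic soliton condition \eqref{sol} into a finite-dimensional algebraic problem on $\Lambda^3\ngo_i^*$. For a left-invariant closed $G_2$-structure $\vp$ on the simply connected nilpotent Lie group $N_i$ with Lie algebra $\ngo_i$, and a derivation $D\in\Der(\ngo_i)$, the vector field $X$ generated by the one-parameter group of automorphisms $\exp(tD)$ acts on left-invariant forms by pullback, so that $\lca_X\vp$ equals, up to a sign we absorb into $D$, the natural action $\theta(D)\vp := \vp(D\cdot,\cdot,\cdot)+\vp(\cdot,D\cdot,\cdot)+\vp(\cdot,\cdot,D\cdot)$ of $D$ on the $3$-form. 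Consequently $\vp$ is a semi-algebraic soliton exactly when
\begin{equation}\label{algsol}
\Delta_\vp\vp = c\,\vp + \theta(D)\vp
\end{equation}
for some $c\in\RR$ and $D\in\Der(\ngo_i)$, and it is algebraic when $D$ may in addition be taken with $D^t\in\Der(\ngo_i)$ (equivalently, by \cite[Theorem~$4.10$]{L2}, when the Laplacian flow evolution is diagonal). The whole theorem thus amounts to solving \eqref{algsol} for each $i$ with the prescribed type of $D$.

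First I would write down, for each $i$, the explicit closed $3$-form $\vp$ from the Conti--Fern\'andez classification recorded in Table~\ref{nl} and compute $\Delta_\vp\vp$ directly in the fixed orthonormal coframe. Since $d\vp=0$, the Hodge Laplacian reduces to $\Delta_\vp\vp = d\,d^*\vp = -\,d*d*\vp$, so the computation is just two exterior derivatives and two Hodge stars, each a finite expression in the structure constants. Expanding $\Delta_\vp\vp$ in the basis of $\Lambda^3\ngo_i^*$ makes the right-hand side of \eqref{algsol} linear in $c$ and in the entries of $D$; matching coefficients then yields a linear system, and intersecting its solution space with $\Der(\ngo_i)$ describes all admissible pairs $(c,D)$. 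For $i=1,2$ I expect this system to admit a self-adjoint solution $D$, giving the algebraic solitons of part~(i), while for $i=3$ a residual free parameter (after normalizing by scaling and automorphisms) produces the one-parameter family of algebraic solitons of part~(ii).

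The two \emph{negative} assertions require separate arguments and are, I expect, the real difficulties. For part~(ii) I must show the family is pairwise non-homothetic; since two left-invariant structures on $N_3$ are homothetic only if related by an automorphism of $\ngo_3$ together with a scaling, I would construct a scale-invariant quantity — for instance a ratio of eigenvalues of the Ricci operator $\Ricci$, or of the self-adjoint operator encoding $\Delta_\vp\vp$ — and verify that it depends non-trivially on the parameter, so that distinct parameters give non-homothetic solitons. For part~(iii) the claim ``semi-algebraic but not algebraic'' is a non-existence statement: I would exhibit $D$ solving \eqref{algsol} for $\ngo_4,\dots,\ngo_7$ and then show, by analysing the full solution set of \eqref{algsol} against the explicit derivation algebra $\Der(\ngo_i)$, that no solution $D$ can satisfy $D^t\in\Der(\ngo_i)$; equivalently, that the symmetric part forced by $\Delta_\vp\vp$ is incompatible with being a derivation, so that by \cite[Theorem~$4.10$]{L2} the flow evolution is genuinely non-diagonal.

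The main obstacle is precisely these non-existence and non-equivalence steps: computing $\Delta_\vp\vp$ and solving \eqref{algsol} is lengthy but mechanical, whereas proving non-homotheticity in (ii) demands an invariant that genuinely separates the family, and proving non-algebraicity in (iii) demands controlling \emph{all} compatible derivations rather than a single exhibited one. I would therefore devote most care to pinning down $\Der(\ngo_i)$ explicitly and to choosing curvature invariants robust enough to detect the distinctions claimed.
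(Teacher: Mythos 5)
Your algebraic reformulation of the semi-algebraic soliton condition is correct, but your plan has a genuine gap at its center: you fix \emph{both} the bracket of $\ngo_i$ from Table \ref{nl} \emph{and} one explicit closed $3$-form $\vp$, and then treat the soliton equation as a linear system in the unknowns $(c,D)$ alone. For a fixed closed $G_2$-structure there is no reason for this system to be solvable with $D\in\Der(\ngo_i)$: being a Laplacian soliton is a strong condition on the $G_2$-\emph{structure} itself, and a generic closed $\vp$ on these algebras is not a soliton, so your sentence ``for $i=1,2$ I expect this system to admit a self-adjoint solution'' is precisely the existence assertion that has to be proved, and your scheme provides no mechanism for producing it. The missing idea is to search over the $G_2$-structure as well. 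The paper does this by fixing one convenient positive form $\vp_i$ and instead putting parameters into the bracket: it works on $\ngo_i(a,b,c,\dots)$, isomorphic to $\ngo_i$ for all nonzero parameters, which via the $\Gl$-action amounts to moving $\vp$ through a family of structures on the fixed algebra. Closedness then becomes a linear constraint on the parameters (e.g.\ $a=b+c$ for $\ngo_3$), the Laplacian is computed once as a function of the parameters, and the soliton equation with an explicit ansatz for $D$ holds exactly on a parameter subvariety (e.g.\ $a^2=2b^2$ for $\ngo_4$); this is where existence comes from. (Note also that the theorem includes $\ngo_1$, which you never treat; that case is trivial since $\Delta_\vp\vp=0$ there, giving a steady algebraic soliton with $D=0$.)

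The same omission breaks your argument for part (ii): with $\vp$ fixed, residual freedom in the solution set $(c,D)$ produces different representations of the \emph{same} soliton $\vp$, never a one-parameter family of pairwise non-homothetic solitons, which must be a family of distinct $G_2$-structures. In the paper the family comes, again, from the bracket parameters: $(\ngo_3(1,1-t,t),\vp_3)$ for $t\in(0,\tfrac12)$. Your tool for non-homothety (scale-invariant ratios of Ricci eigenvalues) is sound and essentially what the paper uses --- it compares the ordered positive Ricci eigenvalues $\tfrac{t^2}{2}<\tfrac{(1-t)^2}{2}<\tfrac12$ under a homothety, after observing that the cruder invariant $R_\vp^2/\tr(\Ricci_\vp^2)$ is constant in $t$ and detects nothing. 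For part (iii), your insistence on controlling \emph{all} admissible $D$, not just one exhibited solution, is a legitimate concern under the definition, but it is resolved cheaply rather than by a large computation: by \cite[Proposition $4.5$]{L2}, any admissible pair $(D',\lambda')$ satisfies $\frac{D'+(D')^t}{2}=-Q_\vp-\tfrac{\lambda'}{3}I$, and since a non-abelian Lie algebra admits no derivation whose symmetric part is a nonzero multiple of $I$, the constant and the symmetric part are the same for every representation; hence the paper's one-line verification that the exhibited $D$ satisfies $D^t\notin\Der(\ngo_4)$ already rules out algebraicity outright.
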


The Laplacian solitons obtained are all expanding (i.e.\ $c > 0$ in \eqref{sol}). It is not hard to see that in the cases $\ngo_1$ and $\ngo_2$, the
Laplacian soliton is also a Ricci soliton. In cases $\ngo_4$ and $\ngo_6$, the Laplacian soliton we found is not a Ricci soliton, though $\ngo_4$ and
$\ngo_6$ are known to admit closed $G_2$-structures with Ricci soliton associated metrics. It is also proved in \cite{FFM} that the remaining algebras $\ngo_3$, $\ngo_5$ and  $\ngo_7$ do not
admit a closed $G_2$-structure with Ricci soliton associated metric.

The family of non-homothetic Laplacian solitons found on $\ngo_3$ shows that the uniqueness up to isometry and scaling of Ricci solitons on nilpotent Lie
algebras (see \cite{L1}) does not hold in the Laplacian case.  Another relevant difference between Laplacian and Ricci solitons is the fact that any
homogeneous Ricci soliton is isometric to an algebraic soliton (see \cite{J}).  On the contrary, we proved that four of the Lie algebras admit
semi-algebraic Laplacian solitons that are not equivalent to any algebraic soliton.

It would be desirable to find a Laplacian soliton on every Lie algebra in Table \ref{nl}, but with our method the computations became too complicated. Indeed, the Ricci
soliton on $\ngo_{10}$, whose existence was proved in  \cite[Example $2$]{FC}, is not known explicitly; moreover, the existence of a closed $G_2$-structure with a
Ricci soliton associated metric on $\ngo_{10}$ is still open (see \cite[Remark $3.5$]{FFM}).

\section{Preliminaries}

Given a $7$-dimensional differentiable manifold $M$, we consider a differentiable $3$-form $\vp\in\Omega^3 M$. For each $p\in M$, $\vp_p$ is said to be
\emph{positive} if there exists a basis $\{e_1,\dots,e_7\}$ of $T_pM$ such that
\begin{equation}\label{phi}
\vp_p= e^{127}+e^{347}+e^{567}+e^{135}-e^{146}-e^{236}-e^{245},
\end{equation}
where $e^{ijk}:=e^{i}\wedge e^{j}\wedge e^{k}$ and $\{e^1,\dots,e^7\}$ is the dual basis of  $\{e_1,\dots,e_7\}$. When $\vp_p$ is positive for every $p\in
M$, we call $\vp$ a \emph{$G_2$-structure}. Any $G_2$-structure induces a Riemannian metric $g_\vp$ and an orientation, and so a Hodge star operator
denoted by $\ast_\vp:\Omega M \rightarrow \Omega M$. The Hodge star operator in combination with the differential of forms on $M$ define the Hodge
Laplacian operator $\Delta_\vp$. In particular, on $3$-forms, $\Delta_\vp : \Omega^3 M \rightarrow \Omega^3 M$ is given by $\Delta_\vp=\ast_\vp d \ast_\vp
d-d \ast_\vp d \ast_\vp$.

For a one-parameter family  $\vp(t)$ of $G_2$-structures on $M$, we have a natural geometric flow, introduced by R. Bryant in $1992$, given by
\begin{equation}\label{intro-FL}
\dpar\vp(t) = \Delta_{\vp(t)}\vp(t),
\end{equation}
so called the \emph{Laplacian flow} (see \cite{Bry}). A $G_2$-structure $\vp$ on a $7$-differentiable manifold flows in a self-similar way along the
Laplacian flow, i.e.\ the solution $\vp(t)$ with $\vp(0)=\vp$ has the form
$$
\vp(t)=c(t)f(t)^*\vp, \quad\mbox{ for some } c(t)\in\RR^* \mbox{ and } f(t)\in\Diff(M),
$$
if and only if
$$
\Delta_\vp\vp=c\vp+\lca_{X}\vp, \qquad \mbox{for some}\quad c\in\RR, \quad X\in\mathfrak{X}(M)\; \mbox{(complete)},
$$
where $\lca_X$ denotes the Lie derivative. In that case, $c(t)=\left(\frac{2}{3}ct+1\right)^{3/2}$ and $\vp$ is called a \emph{Laplacian soliton}.
Furthermore, $\vp$ is said to be \emph{expanding}, \emph{steady} or \emph{shrinking}, when $c>0,$ $c=0$ or $c<0$, respectively.

A $G_2$-structure $\vp$ on a $7$-differentiable manifold is said to be \emph{closed} if $d\vp=0$. In the closed case, the intrinsic torsion is only given
by the $2$-form
$$
\tau_\vp = - \ast_\vp  d \ast_\vp \vp, \qquad d\tau_\vp = \Delta_\vp \vp.
$$

We now consider a $7$-dimensional vector space $\ggo$. It is known that a $3$-form $\psi\in\Lambda^3\ggo^*$ is positive, i.e.\ $\psi$ can be written as
\begin{equation}
\vp_0:= e^{127}+e^{347}+e^{567}+e^{135}-e^{146}-e^{236}-e^{245},
\end{equation}
relative to some basis $\{e_1,\dots,e_7\}$ of $\ggo$, if and only if $\psi$ is in the orbit $\Gl(\ggo)\cdot\vp_0$. Here the action is given by,
\begin{equation}\label{Gl-action}
(h\cdot\phi)(X_1,\dots,X_k)=\phi(h^{-1}X_1,\dots,h^{-1}X_k), \qquad \forall X_1,\dots,X_k \in\ggo, \quad \phi\in\Lambda^k\ggo^*.
\end{equation}
Also, we know that $\vp_0$ induces an inner product on $\ggo$ as follows:
$$
\la X , Y \ra_{\vp_0} \vol_0 := \tfrac{1}{6} \iota_X \vp_0 \wedge \iota_Y \vp_0 \wedge \vp_0,
$$
where $\vol_0:=e^{1\dots7}$ and $\iota_X$ is defined by $(\iota_X \phi)(\cdot,\cdot):=\phi(X,\cdot,\cdot)$. It is easy to check that the basis
$\{e_1,\dots,e_7 \}$ is orthonormal with respect to the inner product $\ip_{\vp_0}$ and oriented relative to $\vol_0$.

Every positive $3$-form $\psi=h\cdot\vp_0$ with $h\in\Gl(\ggo)$ defines an inner product $\ip_\psi$ and a volume form $\vol_\psi$ by
\begin{equation}\label{pi-phi}
\ip_\psi:=\la h^{-1}\cdot,h^{-1}\cdot \ra_{\vp_0}, \qquad \vol_\psi := h\cdot\vol_0.
\end{equation}

If $\{f_1,\dots,f_7\}$ is an orthonormal basis of $(\ggo,\ip_\psi)$, then we also denote by $\ip_\psi$ the inner product on $\Lambda^k\ggo^*$, which makes
of $\{f^{i_1\dots i_k} : i_1<\dots<i_k \}$ an orthonormal basis.

The following facts are direct consequences of the above definitions.

\begin{lemma} \label{lema1} Let $\ggo$ be a $7$-dimensional vector space. If $X,Y\in\ggo$, $h\in\Gl(\ggo)$ and
$\psi\in\Lambda^3\ggo^*$ is positive, then,
\begin{itemize}

\item[(i)] $ \la X , Y \ra_\psi  \vol_\psi =\tfrac{1}{6} \iota_X \psi \wedge
\iota_Y \psi \wedge\psi$.

\item[(ii)] $\la X,Y \ra_{h\cdot \psi}=\la h^{-1} X,h^{-1}Y \ra_\psi, \quad \forall
X,Y\in\ggo, \quad ($i.e.\ $\ip_{h\cdot\psi}=h\cdot\ip_\psi)$.

\item[(iii)] $ \ip_{c \psi}=c^{\frac{2}{3}} \ip_\psi, \quad \forall c \in\RR^*$.

\end{itemize}\end{lemma}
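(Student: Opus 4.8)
The plan is to derive all three identities formally from the defining relations $\ip_\psi=\la h^{-1}\cdot,h^{-1}\cdot\ra_{\vp_0}$ and $\vol_\psi=h\cdot\vol_0$ attached to any factorization $\psi=h\cdot\vp_0$, together with the observation that the action \eqref{Gl-action} is nothing but pullback, $h\cdot\phi=(h^{-1})^*\phi$. First I would record two naturality properties that follow in one line from \eqref{Gl-action}: the action commutes with the wedge product, $h\cdot(\alpha\wedge\beta)=(h\cdot\alpha)\wedge(h\cdot\beta)$, and it intertwines contraction, $\iota_X(h\cdot\phi)=h\cdot(\iota_{h^{-1}X}\phi)$ (evaluate both sides on $k-1$ vectors and compare). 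I would also note the scalar rule $(\lambda\operatorname{id})\cdot\phi=\lambda^{-k}\phi$ for $\phi\in\Lambda^k\ggo^*$ and $\lambda\in\RR^*$, which is again immediate from the definition.

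For (i), write $\psi=h\cdot\vp_0$ and apply the linear operator $h\cdot(-)$ to the defining identity $\la h^{-1}X,h^{-1}Y\ra_{\vp_0}\vol_0=\tfrac16\,\iota_{h^{-1}X}\vp_0\wedge\iota_{h^{-1}Y}\vp_0\wedge\vp_0$. On the left the scalar $\la h^{-1}X,h^{-1}Y\ra_{\vp_0}=\la X,Y\ra_\psi$ passes through and $h\cdot\vol_0=\vol_\psi$; on the right the two naturality properties turn $h\cdot(\iota_{h^{-1}X}\vp_0)$ into $\iota_X\psi$ (and likewise for $Y$) and $h\cdot\vp_0$ into $\psi$, giving exactly $\la X,Y\ra_\psi\,\vol_\psi=\tfrac16\,\iota_X\psi\wedge\iota_Y\psi\wedge\psi$. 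A useful byproduct is that the right-hand side manifestly depends on $\psi$ alone.

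For (ii), pick $g\in\Gl(\ggo)$ with $\psi=g\cdot\vp_0$, so that $h\cdot\psi=(hg)\cdot\vp_0$; unwinding the definition, both $\la X,Y\ra_{h\cdot\psi}$ and $\la h^{-1}X,h^{-1}Y\ra_\psi$ equal $\la g^{-1}h^{-1}X,g^{-1}h^{-1}Y\ra_{\vp_0}$, hence they coincide. For (iii), the scalar rule with $\lambda=c^{-1/3}$ (a real cube root, legitimate since $c\neq 0$) gives $c\psi=(c^{-1/3}h)\cdot\vp_0$, and then by definition and bilinearity $\ip_{c\psi}=\la c^{1/3}h^{-1}\cdot,c^{1/3}h^{-1}\cdot\ra_{\vp_0}=c^{2/3}\ip_\psi$. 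None of this presents a genuine obstacle: the statements are naturality consequences of the definitions, and the only point demanding attention is the exponent bookkeeping in (iii), where the square of the cube root $c^{1/3}$ produces the factor $c^{2/3}$ (consistently, via (i) one instead balances the $c^3$ coming from the cubic form against $\vol_{c\psi}=c^{7/3}\vol_\psi$).
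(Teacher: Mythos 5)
Your proof is correct and takes precisely the route the paper intends: the paper states Lemma \ref{lema1} without proof, as a ``direct consequence of the above definitions,'' and your argument is exactly that unwinding of \eqref{pi-phi} via the naturality of the $\Gl(\ggo)$-action (compatibility with wedge products, contractions, and scalar multiples), with (ii) reduced to the left-action property $h\cdot(g\cdot\vp_0)=(hg)\cdot\vp_0$. Your scalar bookkeeping in (iii), based on $(c^{-1/3}I)\cdot\psi=c\psi$, is also the same device the paper itself uses later in the proof of Lemma \ref{inv}(ii)(b).
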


For our next lemma, we need to introduce a definition. Let $\ggo$ be a Lie algebra and $G$ the corresponding simply connected Lie group. We note that each
positive $3$-form $\vp\in\Lambda^3\ggo^*$ defines a left-invariant $G_2$-structure on $G$. Given $D\in\Der(\ggo)$ and $t\in\RR$, we denote by
$f_t\in\Aut(G)$ the automorphism such that $df_t|_e=e^{tD}\in\Aut(\ggo)$ and by $X_D$ the corresponding vector field on $G$:
$$
X_D(a):= \frac{d}{dt}\Big{|}_0 f_t(a), \quad \forall a\in G.
$$

It is easy to prove that the Lie derivative of a left-invariant form $\psi\in\Lambda^k\ggo^*$ with respect to $X_D$ is given by
\begin{equation}\label{prop-lieder}
(\lca_{X_D}\psi)(X_1,\dots, X_k):=\psi(DX_1,X_2,\dots,X_k)+ \dots + \psi(X_1,X_2,\dots,DX_k),
\end{equation}
for all $X_1,\dots,X_k \in \ggo$.

\begin{lemma}\label{inv} Let $\ggo$ be a $7$-dimensional Lie algebra and consider
$\psi\in\Lambda^k\ggo^*$, $h\in\Aut(\ggo)$.
\begin{itemize}
\item[(i)]$ d(h\cdot\psi)=h\cdot d\psi.$
\item[(ii)]If $k=3$ and $\psi$ is positive, then
    \begin{itemize}
    \item[(a)] $\Delta_{h\cdot\psi} h\cdot\psi=h\cdot\Delta_{\psi}\psi.$
    \item[(b)]$\Delta_{c\psi} c\psi=c^{\frac{1}{3}}\Delta_{\psi} \psi, \quad \forall c\in\RR^*$.
    \end{itemize}
\item[(iii)]$\lca_{X_{hDh^{-1}}}(h\cdot\psi)=h\cdot\lca_{X_D}\psi$, for any $D\in\Der(\ggo)$.
\end{itemize}
\end{lemma}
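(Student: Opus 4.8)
The plan is to reduce all four identities to the equivariance of three basic operators — the exterior differential $d$, the Hodge star $\ast_\psi$, and the infinitesimal action producing $\lca_{X_D}$ — under the $\Gl(\ggo)$-action \eqref{Gl-action}. The cornerstone, from which (ii) will follow, is the star-intertwining relation
\begin{equation*}
\ast_{h\cdot\psi}(h\cdot\beta) = h\cdot(\ast_\psi\beta), \qquad \beta\in\Lambda^k\ggo^*,
\end{equation*}
valid for every $h\in\Gl(\ggo)$ (positivity of $\psi$ being used only to have $\ast_\psi$ defined). To prove it I would characterize $\ast_\psi$ by $\gamma\wedge\ast_\psi\beta=\la\gamma,\beta\ra_\psi\vol_\psi$, write an arbitrary test form as $h\cdot\gamma$, and compute both sides: on one hand $h\cdot\gamma\wedge\ast_{h\cdot\psi}(h\cdot\beta)=\la h\cdot\gamma,h\cdot\beta\ra_{h\cdot\psi}\vol_{h\cdot\psi}$, on the other $h\cdot\gamma\wedge h\cdot(\ast_\psi\beta)=h\cdot(\gamma\wedge\ast_\psi\beta)=\la\gamma,\beta\ra_\psi\,h\cdot\vol_\psi$. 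These agree once one knows the two facts $\la h\cdot\gamma,h\cdot\beta\ra_{h\cdot\psi}=\la\gamma,\beta\ra_\psi$ (the form-level version of Lemma \ref{lema1}(ii), obtained by checking that $h\cdot(-)$ carries a $\ip_\psi$-orthonormal basis of $\Lambda^k\ggo^*$ to a $\ip_{h\cdot\psi}$-orthonormal one) and $\vol_{h\cdot\psi}=h\cdot\vol_\psi$ (immediate from \eqref{pi-phi}: if $\psi=g\cdot\vp_0$ then $h\cdot\psi=(hg)\cdot\vp_0$). Non-degeneracy of the wedge pairing $\Lambda^k\times\Lambda^{7-k}\to\Lambda^7$ then forces the claimed equality.

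For (i), I would use that $d$ on $\Lambda^\bullet\ggo^*$ is the Chevalley--Eilenberg differential, so $d(h\cdot\psi)(X_0,\dots,X_k)=\sum_{i<j}(-1)^{i+j}(h\cdot\psi)([X_i,X_j],\dots)$. Replacing $h\cdot\psi$ by its definition and using that $h\in\Aut(\ggo)$ gives $[h^{-1}X_i,h^{-1}X_j]=h^{-1}[X_i,X_j]$, so every entry becomes an $h^{-1}$ applied to the original arguments, i.e.\ $d(h\cdot\psi)=(d\psi)(h^{-1}\cdot,\dots)=h\cdot d\psi$; this is the only place where the automorphism (rather than merely linear) nature of $h$ enters. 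Part (ii)(a) is then pure bookkeeping: writing $\Delta_\psi=\ast_\psi d\ast_\psi d-d\ast_\psi d\ast_\psi$ and feeding $h\cdot\psi$ through it, each application of $d$ and of $\ast_{h\cdot\psi}$ peels off through $h\cdot$ by (i) and the star-intertwining relation, so $\Delta_{h\cdot\psi}(h\cdot\psi)=h\cdot(\ast_\psi d\ast_\psi d\psi)-h\cdot(d\ast_\psi d\ast_\psi\psi)=h\cdot\Delta_\psi\psi$.

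Part (ii)(b) I would derive from the same star-intertwining relation applied to the scalar matrix $h=c^{-1/3}\operatorname{Id}\in\Gl(\ggo)$, for which $c\psi=h\cdot\psi$ and $h\cdot\beta=c^{k/3}\beta$ on $\Lambda^k\ggo^*$; comparing degrees gives $\ast_{c\psi}=c^{(7-2k)/3}\ast_\psi$ on $\Lambda^k\ggo^*$. Combined with the trivial $d(c\,\omega)=c\,d\omega$, one tracks the scalar factors through the two terms of $\Delta_{c\psi}(c\psi)$ — the $3$-form $c\psi$ passes through $d$, then $\ast_{c\psi}$ on $\Lambda^4\ggo^*$ contributes $c^{-1/3}$, and so on — and both terms come out multiplied by exactly $c^{1/3}$, using real cube roots throughout so that the sign of $c$ (equivalently the orientation flip recorded by $\vol_{c\psi}=c^{7/3}\vol_\psi$) is handled automatically.

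Finally, for (iii) I would compute directly from \eqref{prop-lieder}, pulling the action through:
\begin{align*}
\big(\lca_{X_{hDh^{-1}}}(h\cdot\psi)\big)(X_1,\dots,X_k)
&=\sum_{i}(h\cdot\psi)(X_1,\dots,hDh^{-1}X_i,\dots,X_k)\\
&=\sum_{i}\psi(h^{-1}X_1,\dots,Dh^{-1}X_i,\dots,h^{-1}X_k),
\end{align*}
which by \eqref{prop-lieder} applied at the vectors $h^{-1}X_j$ equals $(\lca_{X_D}\psi)(h^{-1}X_1,\dots,h^{-1}X_k)=(h\cdot\lca_{X_D}\psi)(X_1,\dots,X_k)$; here the conjugate $hDh^{-1}$ is again a derivation, so $X_{hDh^{-1}}$ is defined. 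I expect the only genuinely delicate point to be the star-intertwining relation underlying (ii): it requires simultaneously the metric transformation (Lemma \ref{lema1}(ii)) and the matching transformation of the volume form, and (for (ii)(b)) a careful treatment of the fractional powers of $c$. Everything else is a substitution into the defining formulas for $d$, $\ast$ and $\lca$.
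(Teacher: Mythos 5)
Your proposal is correct and follows essentially the same route as the paper: part (i) via the Chevalley--Eilenberg differential and the automorphism property, the star-intertwining relation $\ast_{h\cdot\psi}(h\cdot\phi)=h\cdot\ast_\psi\phi$ proved through the defining wedge identity together with $\ip_{h\cdot\psi}=h\cdot\ip_\psi$ and $\vol_{h\cdot\psi}=h\cdot\vol_\psi$, part (ii)(b) by specializing that relation to $h=c^{-1/3}I$ to get $\ast_{c\psi}=c^{(7-2s)/3}\ast_\psi$ and tracking powers of $c$, and part (iii) by direct substitution into \eqref{prop-lieder}. Your explicit remark that the star relation needs only $h\in\Gl(\ggo)$ (essential since $c^{-1/3}I$ is not an automorphism in general) is a point the paper uses implicitly but does not state; otherwise the two arguments coincide.
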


\begin{proof}
We first prove part (i). For $X_1,X_2, \dots, X_{k+1} \in \ggo$, one has that
\begin{align*}
d(&h \cdot  \psi)(X_1,\dots, X_{k+1})=\\
                    =&\sum_{i<j}(-1)^{i+j} (h\cdot\psi )([X_i,X_j],X_1,\dots,\widehat{X_i},\dots,\widehat{X_j},\dots,X_{k+1})\\
                               =&\sum_{i<j}(-1)^{i+j} \psi (h^{-1}[X_i,X_j],h^{-1}X_1,\dots,\widehat{h^{-1}X_i},\dots,\widehat{h^{-1}X_j},\dots,h^{-1}X_{k+1})\\
                               =&\sum_{i<j}(-1)^{i+j} \psi ([h^{-1}X_i,h^{-1}X_j],h^{-1}X_1,\dots,\widehat{h^{-1}X_i},\dots,\widehat{h^{-1}X_j},\dots,h^{-1}X_{k+1})\\
                               = & d\psi(h^{-1}X_1,\dots, h^{-1}X_{k+1})= (h\cdot d\psi) (X_1,\dots, X_{k+1}).
\end{align*}
To prove part (ii), let us first show that $\ast_{h\cdot\psi}h\cdot\phi=h\cdot\ast_{\psi}\phi$, for any $\phi\in\Lambda^s\ggo^*$. To do this, fix
$\alpha\in\Lambda^{s}\ggo^*$ and $X,Y \in \ggo$. Thus
\begin{align*}
\alpha \wedge \ast_{h\cdot\psi} h\cdot \phi =& \la \alpha , h \cdot\phi \ra_{h \cdot \psi} \vol_{h \cdot \psi}= \la h^{-1} \cdot \alpha , h^{-1} \cdot( h \cdot\phi )\ra_{\psi} h\cdot \vol_{\psi}\\
                                            =& h\cdot ( \la h^{-1} \cdot \alpha , \phi \ra_{\psi}\vol_{\psi} )= h\cdot ( h^{-1} \cdot \alpha \wedge \ast_\psi \phi )\\
                                            =& \alpha \wedge h\cdot ( \ast_\psi \phi).
\end{align*}
On the other hand, it is easy to see that $\vol_{h \cdot \psi}=h\cdot \vol_{\psi}$ and $\la\alpha,\beta\ra_{h\cdot\psi}=\la h^{-1} \cdot \alpha,h^{-1}
\cdot \beta\ra_\psi, $ for every $\alpha, \beta\in\Lambda^s\ggo^*$.

Now we can prove (a) as follows:
\begin{align*}
- \Delta_{h\cdot\psi} (h\cdot\psi) =&d\ast_{h\cdot\psi}d\ast_{h\cdot\psi}(h\cdot\psi) -\ast_{h\cdot\psi}  d\ast_{h\cdot\psi}  d(h\cdot\psi)\\
                                          =&d\ast_{h\cdot\psi}d(h\cdot (\ast_{\psi}\psi)) -\ast_{h\cdot\psi}  d\ast_{h\cdot\psi}(h\cdot(d\psi))\\
                                          =&d\ast_{h\cdot\psi}(h\cdot(d\ast_{\psi}\psi)) -\ast_{h\cdot\psi}  d(h\cdot (\ast_{\psi}d\psi))\\
                                          =&d (h\cdot (\ast_{\psi}d\ast_{\psi}\psi)) -\ast_{h\cdot\psi}(h\cdot (d\ast_{\psi}d\psi))\\
                                          =& h\cdot (d\ast_{\psi}d\ast_{\psi}\psi) -h\cdot (\ast_{\psi}d\ast_{\psi}d\psi)\\
                                          =& h\cdot (d\ast_{\psi}d\ast_{\psi}\psi -\ast_{\psi}d\ast_{\psi}d\psi)\\
                                          =&-h\cdot\Delta_{\psi}\psi.
\end{align*}
For item (b), we first observe that $\ast_{c\psi}\phi=c^{\frac{7-2s}{3}}\ast_{\psi}  \phi $, for all $c\in\RR^*$, $\phi\in\Lambda^s\ggo^*$. Indeed, since
$$
(c^{-\frac{1}{k}} I)\cdot\vp=c \vp, \quad \forall c\in\RR^*,\quad \vp\in\Lambda^k\ggo^*,
$$
$h\cdot\ast_\psi \phi=\ast_{h\cdot\psi}h\cdot \phi $, we obtain
\begin{align*}
c^{\frac{s}{3}}\ast_{c\psi} \phi&=\ast_{c\psi} c^{\frac{s}{3}}\phi=\ast_{(c^{-\frac{1}{3}}I)\cdot\psi} ((c^{\frac{s}{3}})^{-\frac{1}{s}}I)\cdot\phi=\ast_{(c^{-\frac{1}{3}}I)\cdot\psi}(c^{-\frac{1}{3}}I)\cdot\phi\\
&=(c^{-\frac{1}{3}}I)\cdot \ast_{\psi} \phi=c^{\frac{7-s}{3}}\ast_{\psi} \phi.
\end{align*}
We now proceed to show (b):
\begin{align*}
\Delta_{c\psi} c\psi&= - d \ast_{c\psi} d \ast_{c\psi} c\psi + \ast_{c\psi} d \ast_{c\psi} d \hspace{0.1 cm} c\psi=  c (- d \ast_{c\psi} d \ast_{c\psi} \psi +  \ast_{c\psi} d \ast_{c\psi} d \psi)\\
                    &=  c(- d\hspace{0.1 cm} c^{\frac{7-10}{3}} \ast_{\psi} d\hspace{0.1 cm} c^{\frac{7-6}{3}} \ast_{\psi} \psi +  c^{\frac{7-8}{3}}\ast_{\psi} d \hspace{0.1 cm}c^{\frac{7-8}{3}}\ast_{\psi} d \psi)\\
                    &=  c\hspace{0.1 cm}c^{-\frac{2}{3}}(- d  \ast_{\psi} d  \ast_{\psi} \psi +  \ast_{\psi} d \ast_{\psi} d \psi)=  c^{\frac{1}{3}}\Delta_{\psi} \psi.
\end{align*}
Finally, part (iii) follows from \eqref{prop-lieder}: for any $X_1,\dots,X_k\in\ggo$,
\begin{align*}
\lca_{X_{hDh^{-1}}}(&h\cdot \psi)(X_1,\dots,X_k)=\\
            =&(h\cdot\psi)(hDh^{-1}X_1,\dots,X_k)+\dots+(h\cdot \psi)(X_1,\dots,hDh^{-1}X_k)\\
            =&\psi(Dh^{-1}X_1,\dots,h^{-1}X_k)+\dots+\psi(h^{-1}X_1,\dots,Dh^{-1}X_k)\\
            =&\lca_{X_D}\psi(h^{-1}X_1,\dots,h^{-1}X_k)=(h\cdot\lca_{X_D}\psi)(X_1,\dots,X_k),
\end{align*}
and this concludes the proof of the lemma.
\end{proof}

Laplacian solitons on Lie groups have been deeply studied in \cite{L2}. The following definition will be used from now on along the paper.

\begin{definition}
Given $\ggo$ a $7$-dimensional Lie algebra and $\psi$ a positive $3$-form on $\ggo$, we call $(\ggo,\psi)$ a \emph{semi-algebraic soliton} if there exist
$D\in\Der (\ggo)$ and $\lambda\in\RR$ such that
\begin{equation}
\Delta_\psi \psi=\lca_{X_D}\psi+\lambda \psi.
\end{equation}
In the case when $D^t\in\Der(\ggo)$, we say that $(\ggo,\psi)$ is an \emph{algebraic soliton}.
\end{definition}

Let $\theta:\glg(\ggo)\rightarrow \End(\Lambda^3\ggo^*)$ be the derivative of the action given by \eqref{Gl-action}, i.e.\
$$
\theta(A)\psi(\cdot,\cdot,\cdot)=-\psi(A\cdot,\cdot,\cdot)-\psi(\cdot,A\cdot,\cdot)-\psi(\cdot,\cdot,A\cdot), \qquad \forall A\in\glg(\ggo), \quad
\psi\in\Lambda^3\ggo^*.
$$
It is shown in \cite[(11)]{L2} that for any closed $G_2$-structure $\psi$ on $\ggo$, there exists a unique symmetric operator $Q_\psi\in\glg(\ggo)$ such
that $\theta(Q_\psi)\psi=\Delta_\psi\psi$. The following useful formula for $Q_\psi$ was given in \cite[Proposition $2.2$]{L2}: for any closed
$G_2$-structure $\psi$,
\begin{equation}\label{lot-wei}
Q_\psi=\Ricci_\psi - \frac{1}{12} \tr (\tau_\psi^2) I + \frac{1}{2} \tau_\psi^2,
\end{equation}
where $\Ricci_\psi$ is the Ricci operator of $(G,g_\psi)$ and $\tau_\psi\in\sog(TG)$ also denotes the skew-symmetric operator determined by the $2$-form
$\tau_\psi$ (i.e.\ $\tau_\psi=\la \tau_\psi\cdot,\cdot\ra_\psi$).

According to \cite[Proposition $4.5$]{L2}, $(\ggo,\psi)$ is a semi-algebraic soliton with $\Delta_\psi \psi=\lca_{X_D}\psi+\lambda \psi$, if and only if
$Q_\psi=-\frac{1}{3}\lambda I - \frac{D+D^t}{2}$. Recall that $\psi$ is an algebraic soliton if and only if $\frac{D+D^t}{2}\in\Der(\ggo)$.

\begin{definition}\label{def-equiv} We say that two $G_2$-structures $(\ggo_1,\psi_1)$ and $(\ggo_2,\psi_2)$ are \emph{equivalent}
if there exists a Lie algebra isomorphism $h:\ggo_1 \rightarrow \ggo_2$ such that $h\cdot\psi_1=\psi_2$. We denote it briefly by
$(\ggo_1,\psi_1)\simeq(\ggo_2,\psi_2)$. Also, we say that $(\ggo_1,\psi_1)$ and $(\ggo_2,\psi_2)$ are \emph{homothetic} if there exists $c\in\RR^*$ such
that $(\ggo_1,\psi_1)\simeq(\ggo_2, c \psi_2)$.
\end{definition}

\begin{proposition} Let $\ggo$ be a $7$-dimensional Lie algebra, $\psi_1,\psi_2\in\Lambda^2\ggo^*$ positive such that
$(\ggo,\psi_1)$ and $(\ggo,\psi_2)$ are homothetic. Then $(\ggo,\psi_1)$ is a semi-algebraic soliton if and only if $(\ggo,\psi_2)$ is so.
\end{proposition}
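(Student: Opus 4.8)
The plan is to reduce the statement to two elementary stability properties of the semi-algebraic soliton condition: invariance under the action of automorphisms and invariance under scaling. Note first that the homothety relation is symmetric: if $h\cdot\psi_1=c\psi_2$ with $h\in\Aut(\ggo)$ and $c\in\RR^*$, then applying $h^{-1}$ and using linearity of the action gives $h^{-1}\cdot\psi_2=\tfrac1c\psi_1$, so that $(\ggo,\psi_2)$ and $(\ggo,\psi_1)$ are again homothetic. Hence it suffices to prove a single implication, say that $(\ggo,\psi_1)$ being a semi-algebraic soliton forces $(\ggo,\psi_2)$ to be one.

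First I would unwind the hypothesis into data $h\in\Aut(\ggo)$, $c\in\RR^*$ with $h\cdot\psi_1=c\psi_2$, and write the soliton equation $\Delta_{\psi_1}\psi_1=\lca_{X_D}\psi_1+\lambda\psi_1$ for some $D\in\Der(\ggo)$, $\lambda\in\RR$. Acting by $h$ on both sides and invoking Lemma \ref{inv}(ii)(a) on the Laplacian term together with Lemma \ref{inv}(iii) on the Lie-derivative term, I obtain
\[
\Delta_{h\cdot\psi_1}(h\cdot\psi_1)=\lca_{X_{hDh^{-1}}}(h\cdot\psi_1)+\lambda\,(h\cdot\psi_1).
\]
The point of this step is that $hDh^{-1}\in\Der(\ggo)$ whenever $h\in\Aut(\ggo)$ and $D\in\Der(\ggo)$, which is a direct check using that $h$ and $h^{-1}$ preserve the bracket. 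Since $h\cdot\psi_1=c\psi_2$, this shows that $(\ggo,c\psi_2)$ is already a semi-algebraic soliton, with derivation $\widetilde D:=hDh^{-1}$ and the same scalar $\lambda$.

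It then remains to transfer the soliton property from $c\psi_2$ to $\psi_2$. Here I would combine Lemma \ref{inv}(ii)(b), which yields $\Delta_{a\psi}(a\psi)=a^{1/3}\Delta_\psi\psi$ for any $a\in\RR^*$, with the observation that, by \eqref{prop-lieder}, the map $(D,\psi)\mapsto\lca_{X_D}\psi$ is linear in each of its two arguments. Rewriting the soliton equation for $c\psi_2$ in terms of $\psi_2=\tfrac1c(c\psi_2)$ and collecting the powers of $c$ gives
\[
\Delta_{\psi_2}\psi_2=\lca_{X_{c^{2/3}\widetilde D}}\psi_2+c^{2/3}\lambda\,\psi_2 ;
\]
as $c^{2/3}\widetilde D$ is again a derivation, $(\ggo,\psi_2)$ is a semi-algebraic soliton, and the converse follows from the symmetry noted at the outset. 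The whole argument is a sequence of applications of Lemma \ref{inv}; the only genuinely error-prone point is the bookkeeping of the fractional powers of $c$ in this last scaling step, so that is where I would be most careful.
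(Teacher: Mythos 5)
Your proof is correct and follows essentially the same route as the paper's: both rest on Lemma \ref{inv}(ii)(a), (ii)(b), (iii), the fact that $hDh^{-1}$ and $c^{2/3}hDh^{-1}$ remain derivations, and the same bookkeeping of powers of $c$; the paper merely runs the automorphism and scaling steps in one chain of equalities rather than in two separate stages. Your explicit remark that the homothety relation is symmetric (so one implication suffices) is a small point the paper leaves implicit, but it does not change the argument.
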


\begin{proof}
Recall that $(\ggo,\psi_1)$ is semi-algebraic soliton if and only if there exist $D\in\Der(\ggo)$ and $\lambda\in\RR$ such that $\Delta_{\psi_1} \psi_1 =
\lca_{X_D}\psi_1 + \lambda \psi_1$. Therefore, by Lemma \ref{inv}, we have that
\begin{align*}
c^{\frac{1}{3}}\Delta_{ \psi_2 }\psi_2&=\Delta_{c \psi_2} (c \psi_2)=\Delta_{h\cdot \psi_1} (h\cdot \psi_1)=h\cdot \Delta_{\psi_1} \psi_1=h\cdot (\lca_{X_D}\psi_1 + \lambda \psi_1)\\
&= \lca_{X_{h D h^{-1}}}(h\cdot \psi_1) + \lambda (h\cdot \psi_1)=\lca_{X_{h D h^{-1}}}(c \psi_2) + \lambda (c \psi_2)=c\lca_{X_{h D h^{-1}}} \psi_2 + c
\lambda  \psi_2.
\end{align*}
So, $\Delta_{\psi_2 }\psi_2=c^{\frac{2}{3}}\lca_{X_{h D h^{-1}}} \psi_2 + c^{\frac{2}{3}} \lambda  \psi_2=\lca_{X_{c^{\frac{2}{3}}h D h^{-1}}} \psi_2 +
c^{\frac{2}{3}} \lambda  \psi_2$. Since $c^{\frac{2}{3}}h D h^{-1}\in\Der(\ggo)$, we conclude that $(\ggo,  \psi_2)$ is a semi-algebraic soliton.
\end{proof}

\section{Closed Laplacian solitons}

In \cite{CF}, Conti and Fern\'andez studied the existence of closed $G_2$-structures on a $7$-dimensional nilpotent Lie algebra. They obtained that, up to
isomorphism, there are $12$ nilpotent Lie algebras with that property, which are shown in Table \ref{nl}. It is of interest to know whether these Lie
algebras admit closed Laplacian solitons.

{\small \begin{table}
\renewcommand{\arraystretch}{1.6}
$$
\begin{array}{|c|c|}\hline
  \ggo & \mbox{Lie bracket} \\
\hline\hline
\ngo_1 & \lb = 0 \\
\hline
\ngo_2 & [e_1,e_2]=-e_5, [e_1,e_3]=-e_6  \\
\hline
\ngo_3 & [e_1,e_2]=-e_4, [e_1,e_3]=-e_5, [e_2,e_3]=-e_6\\
\hline
\ngo_4 & [e_1,e_2]=-e_3, [e_1,e_3]=-e_6, [e_2,e_4]=-e_6, [e_1,e_5]=-e_7\\
\hline
\ngo_5 & [e_1,e_2]=-e_3,[e_1,e_3]=-e_6, [e_1,e_4]=-e_7, [e_2,e_5]=-e_7 \\
\hline
\ngo_6 & [e_1,e_2]=-e_4, [e_1,e_3]=-e_5, [e_1,e_4]=-e_6, [e_1,e_5]=-e_7\\
\hline
\ngo_7 & [e_1,e_2]=-e_4, [e_1,e_3]=-e_5, [e_1,e_4]=-e_6, [e_2,e_3]=-e_6, [e_1,e_5]=-e_7  \\
\hline
\ngo_8 & \begin{array}{c}[e_1,e_2]=-e_3, [e_1,e_3]=-e_4, [e_2,e_3]=-e_5, [e_1,e_5]=-e_6,\;\;\ \\
         \;\;\  [e_2,e_4]=-e_6, [e_1,e_6]=-e_7,[e_3,e_4]=-e_7 \end{array} \\
\hline
\ngo_9 & \begin{array}{c}[e_1,e_2]=-e_3, [e_1,e_3]=-e_4, [e_2,e_3]=-e_5, [e_1,e_5]=-e_6, \;\;\ \\
        \;\;\ [e_2,e_4]=-e_6, [e_1,e_6]=-e_7, [e_3,e_4]=-e_7, [e_2,e_5]=-e_7 \end{array}\\
\hline
\ngo_{10} & \begin{array}{c}[e_1,e_2]=-e_3, [e_1,e_3]=-e_5, [e_2,e_4]=-e_5, [e_1,e_4]=-e_6, \;\;\ \\
         \;\;\ [e_4,e_6]=-e_7, [e_3,e_4]=-e_7, [e_1,e_5]=-e_7, [e_2,e_3]=-e_7 \end{array} \\
\hline
\ngo_{11} & \begin{array}{c}[e_1,e_2]=-e_3, [e_1,e_3]=-e_5, [e_2,e_4]=-e_6, [e_2,e_3]=-e_6,  \;\;\ \\
         \;\;\ [e_2,e_5]=-e_7, [e_3,e_4]=-e_7, [e_1,e_5]=-e_7, [e_1,e_6]=-e_7, [e_2,e_6]=3e_7 \end{array}\\
\hline
\ngo_{12} & \begin{array}{c}[e_1,e_2]=-e_4, [e_2,e_3]=-e_5, [e_1,e_3]=e_6, [e_2,e_6]=-2e_7,  \;\;\ \\
        \;\;\ [e_3,e_4]=2e_7, [e_1,e_6]=2e_7, [e_2,e_5]=-2e_7\end{array}\\
\hline
\end{array}
$$
\caption{Nilpotent Lie algebras that admit a closed $G_2$-structure (see \cite{CF}).}\label{nl}
\end{table}}

We prove that for the first seven Lie algebras of the table, there exists at least one closed Laplacian soliton.

\begin{theorem} For each $i=1,\dots,7$, let $\ngo_i$ be the Lie algebra given in Table \ref{nl}.
\begin{itemize}
\item[(i)] $\ngo_2$ admits an algebraic soliton (see Table \ref{n2-n3}).
\item[(ii)] $\ngo_3$ admits a pairwise non-homothetic one-parameter family of algebraic solitons (see Table \ref{n2-n3}).
\item[(iii)] Each of $\ngo_4, \ngo_5, \ngo_6, \ngo_7$ does admit a semi-algebraic soliton which is not algebraic (see Table \ref{n4-n5} and Table \ref{n6-n7}).
\end{itemize}
\end{theorem}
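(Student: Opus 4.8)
The plan is to treat each Lie algebra separately and to reduce the soliton condition to linear algebra through the operator $Q_\psi$. Once a closed positive $3$-form $\psi$ is fixed, \eqref{lot-wei} together with \cite[Proposition $4.5$]{L2} tells us that $(\ngo_i,\psi)$ is a semi-algebraic soliton with $\Delta_\psi\psi=\lca_{X_D}\psi+\lambda\psi$ precisely when
\[
Q_\psi=-\tfrac13\lambda I-\tfrac12(D+D^t),\qquad D\in\Der(\ngo_i),
\]
and it is algebraic exactly when the symmetric operator $\tfrac12(D+D^t)$ can be taken in $\Der(\ngo_i)$, that is, when $Q_\psi=cI+E$ for some $c\in\RR$ and some symmetric $E\in\Der(\ngo_i)$. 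So for each $i$ I would first write down a candidate closed positive $3$-form $\psi$ (for $\ngo_3$ a whole one-parameter family $\psi_s$), impose $d\psi=0$ using the brackets of Table \ref{nl}, and then compute, in this order, the inner product $\ip_\psi$ and the volume $\vol_\psi$ via Lemma \ref{lema1}(i), the Hodge dual $\ast_\psi\psi$, the torsion $\tau_\psi=-\ast_\psi d\ast_\psi\psi$, the Ricci operator $\Ricci_\psi$ of the associated nilpotent metric, and finally $Q_\psi$ from \eqref{lot-wei}.

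For $\ngo_2$ in part (i) and for every member of the family in part (ii), the verification then amounts to exhibiting $Q_\psi$ as $cI+E$ with $E$ a symmetric derivation; reading off $\lambda=-3c$ and $D=-E$ produces the algebraic soliton, and since $\Der(\ngo_2)$ and $\Der(\ngo_3)$ are easy to describe for these low-step algebras, confirming $E\in\Der$ is a direct check against the bracket relations. The substantive point in (ii) is that the $\psi_s$ are pairwise non-homothetic, and I would prove this with a homothety invariant. Under scaling one has $Q_{c\psi}=c^{-2/3}Q_\psi$ (this follows from Lemma \ref{inv}(b) and the uniqueness of $Q$), while under a Lie algebra isomorphism $h$ one has $Q_{h\cdot\psi}=hQ_\psi h^{-1}$ (from Lemma \ref{inv}(a) and the equivariance of $\theta$ under the $\Gl(\ggo)$-action, using Lemma \ref{inv}(ii)). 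Hence the unordered set of ratios of the eigenvalues of $Q_\psi$ is preserved under homothety, so it suffices to compute the spectrum of $Q_{\psi_s}$ and check that this projectivized spectrum genuinely varies with $s$; distinct parameters then give structures that remain inequivalent after any rescaling.

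For part (iii) the computation is identical up to the point of solving $Q_\psi=-\tfrac13\lambda I-\tfrac12(D+D^t)$, but now I expect the prescribed symmetric operator $S:=-Q_\psi-\tfrac13\lambda I$ to be realizable as the symmetric part $\tfrac12(D+D^t)$ of a genuine non-normal derivation $D$ while $S$ itself fails to be a derivation. Concretely, I would determine $\Der(\ngo_i)$ for $i=4,\dots,7$, identify the space of symmetric parts $\{\tfrac12(D+D^t):D\in\Der(\ngo_i)\}$, and exhibit a $D$ with $\tfrac12(D+D^t)=S$; the non-algebraic claim then reduces to showing $S\notin\Der(\ngo_i)$, i.e.\ that no value of $\lambda$ puts $Q_\psi$ in $\RR I+\{$symmetric derivations$\}$, which is seen by testing $S$ against the bracket relations a symmetric derivation would have to satisfy. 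Recording the explicit $\psi$, $D$ and $\lambda$ in the referenced tables completes each case.

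The main obstacle is computational bulk rather than anything conceptual: for $\ngo_4,\dots,\ngo_7$ the Hodge star and the Ricci operator of the underlying nilmanifold metric produce lengthy expressions, and for (iii) the delicate part is controlling $\Der(\ngo_i)$ precisely enough both to produce the required $D$ and to rule out that its symmetric part is itself a derivation. I expect the non-homothety argument in (ii) to be the most interesting step, since there one must extract a true invariant from $Q_\psi$ rather than merely verify a prescribed identity.
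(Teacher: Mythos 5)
Your plan for (i) and (iii) is essentially sound, and is in fact the paper's argument read in reverse: the paper fixes parametrized brackets $\ngo_i(a,b,\dots)$, computes $\Delta_\vp\vp=-d\ast d\ast\vp$ directly, exhibits an explicit $D\in\Der(\ngo_i)$ and $\lambda$, and verifies the soliton equation by hand, using $Q_\vp$ from \eqref{lot-wei} only as an a posteriori cross-check that $Q_\vp=-\tfrac13\lambda I-D$; you take the $Q_\vp$-criterion of \cite[Proposition 4.5]{L2} as the primary device, which is equally legitimate and, computationally, costs about the same (you must still compute $\tau_\vp$ and $\Ricci_\vp$). Your formulation of non-algebraicity in (iii) --- that $-Q_\vp+\nu I$ must fail to be a derivation for \emph{every} $\nu\in\RR$ --- is actually more careful than the paper's check that $D^t\notin\Der(\ngo_4)$ for its particular $D$; happily both come down to the same bracket test, because the obstruction sits in the off-diagonal entries of $Q_\vp$ (e.g.\ the $(4,7)$-entry $\tfrac{ad}{2}$ for $\ngo_4$), which are unaffected by adding $\nu I$.

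There is, however, a genuine gap in (ii), at exactly the step you single out as the most interesting one. Your proposed homothety invariant --- the projectivized spectrum of $Q_\psi$ --- is \emph{constant} along the family on $\ngo_3$ and therefore cannot prove non-homothety. Indeed, every member is an algebraic soliton with the same derivation direction $\Diag(1,1,1,2,2,2,2)$, so the soliton identity forces $Q_{\vp_3}=-\tfrac13\lambda I-D=\tfrac{a^2+b^2+c^2}{6}\Diag(-2,-2,-2,1,1,1,1)$ for all admissible $(b,c)$ with $a=b+c$: the eigenvalue ratios are identical for every member, so your invariant sees nothing. (This is the same phenomenon the paper flags in its remark that $R_\vp^2/|\Ricci_\vp|^2\equiv\tfrac12$ along the family; any quantity determined by the soliton equation itself is doomed here.) The paper's actual argument uses the Ricci operator instead: $\Ricci_{b,c}=\tfrac12\Diag(-a^2-b^2,-a^2-c^2,-b^2-c^2,a^2,b^2,c^2,0)$, whose three positive eigenvalues $\tfrac12 t^2<\tfrac12(1-t)^2<\tfrac12$ (setting $b=1-t$, $c=t$, $t\in(0,\tfrac12)$) have ratios that genuinely vary with $t$; since an equivalence of the $G_2$-structures is in particular an isometry of the associated metrics, matching these ordered eigenvalues up to the scaling factor $k^2$ forces $k^2=1$ and $t_1=t_2$. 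So your scheme for (ii) must replace the $Q$-spectrum by a metric invariant such as the Ricci spectrum; with that substitution the rest of your outline goes through.
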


\begin{proof}
We only give a proof for the cases $\ngo_3$ and $\ngo_4$, the other cases follow in much the same way.

To prove that $\ngo_3$ admits a family of algebraic solitons up to isomorphism and scaling, we consider $\ngo_3(a,b,c)$ to be the $7$-dimensional Lie
algebra with basis $\{e_1,\dots,e_7\}$ and Lie bracket defined by
$$[e_1,e_2]=-ae_4, \quad [e_1,e_3]=-be_5,\quad  [e_2,e_3]=-ce_6, \quad a,b,c\in\RR^*,$$
or equivalently,
\begin{equation}\label{corchete3}
de^{12}=ae^4, \quad de^{13}=be^5, \quad de^{23}=ce^6, \qquad a,b,c\in\RR^*.
\end{equation}
We have a linear isomorphism that carries $\ngo_3(1,1,1)$ into $\ngo_3(a,b,c)$, whose matrix is $\Diag(1,1,a,ab/c,1,ab,d)$. From now on $\ngo_3$ denotes
$\ngo_3(a,b,c)$. We consider the $3$-form
$$
\vp_3=e^{123}+e^{145}+e^{167}+e^{246}-e^{257}-e^{347}-e^{356}\in\Lambda^3\ngo_3^*.
$$
If $h_3\in\Gl_7(\RR)$ is the permutation $(1,6,4,3,5,2,7)$, then $ h_3 \cdot \vp_3 = \vp_0$, which implies that $\vp_3$ is positive. It is easy to check by
using \eqref{corchete3} that $d\vp_3=(a-b-c)e^{1237}$, so $\vp_3$ is closed if and only if $a=b+c$. If we assume $\vp_3$ to be closed, then the Laplacian
can be computed as follows:
\begin{align*}
\ast\vp_3 &= -e^{1247}-e^{1256}-e^{1346}+e^{1357}+e^{2345}+e^{2367}+e^{4567},\\
d\ast\vp_3 &= a e^{12567}-be^{13467}+ce^{23457},\\
\ast  d\ast \vp_3 &= ce^{16}-be^{25}+ae^{34},\\
d\ast  d\ast\vp_3 &= -(a^2+b^2+c^2) e^{123}.
\end{align*}
By replacing in the condition $a=b+c$, we obtain $\Delta_{\vp_3}\vp_3=2(b^2+c^2+bc)e^{123}$.

What is left to show is that $\Delta_{\vp_3}\vp_3 =  \lca_{X_D}\vp_3+\lambda\vp_3 $ for some $D\in\Der(\ngo_3)$ and $\lambda\in\RR$. We propose $ D :=
d\Diag(1,1,1,2,2,2,2)$ with $d\in\RR^*$, so the resulting Lie derivative of $\vp_3$ with respect to the field $X_D$ is
$$
\lca_{X_D}\vp_3=3d e^{123}+ 5 d e^{145}+ 5 d e^{167}+ 5 d e^{246}- 5 d e^{257}- 5 d e^{347}- 5 d e^{356}.
$$
It follows that $\Delta_{\vp_3}\vp_3 =  \lca_{X_D}\vp_3+\lambda\vp_3 $ if and only if $\lambda=- 5 d $ and $d=-(b^2+c^2+bc)$. Since $D=D^t$, one obtains
that $(\ngo_3,\vp_3)$ is an algebraic soliton.

\begin{lemma} If $a,b,c\in\RR^*$ and $\ngo_3(a,b,c)$
are as above, then
\begin{itemize}
\item[(i)] $\vp_3$ is closed if and only if $a=b+c$.
\item[(ii)] $(\ngo_3(b+c,b,c),\vp_3)$ is an algebraic soliton.
\end{itemize}
\end{lemma}

\begin{remark}
For all $b,c\in\RR^*$, the algebraic soliton $(\ngo_3(b+c,b,c),\vp_3)$ is expanding since $\lambda>0$.
\end{remark}

As we have two free parameters, it is natural to ask whether  there are two non-equivalent algebraic solitons on $\ngo_3$.

\begin{proposition}
There exists a pairwise non-homothetic continuous family of algebraic solitons on $\ngo_3$.
\end{proposition}

\begin{remark}
This is in contrast to the known uniqueness up to isometry and scaling of Ricci solitons on nilpotent Lie algebras (see \cite{L1}).
\end{remark}

\begin{proof}
By using \emph{e.g.} the formula for the Ricci operator given in \cite[(8)]{L1}, it is easy to see that
$$
\Ricci_{b,c}=\frac{1}{2}\Diag(-a^2-b^2,-a^2-c^2,-b^2-c^2,a^2,b^2,c^2,0),
$$
where $a=b+c$. Clearly, $\Ricci_{b,c}$ has three positives eigenvalues, one equal to zero and three negatives for each $b,c\in\RR^*$. If we set  $b:=1-t$
and $c:=t$ with $t\in(0,\frac{1}{2})$, then for every $t\in (0,\frac{1}{2})$ the positive eigenvalues are ordered in the following way:
$$
\frac{t^2}{2}  < \frac {1-2t+t^2}{2} <  \frac{1}{2}.
$$
Now, if $(\ngo_3(b_1+c_1,b_1,c_1),\vp_3)$ and $(\ngo_3(kb_2+kc_2,kb_2,kc_2),\vp_3)$ are equivalent for some $k\in\RR^*$ (where $b_i=1-t_i$, $c_i=t_i$),
then there are in particular isometric, hence
$$
\frac{1}{2}=k^2 \frac{1}{2}, \quad \frac {1-2t_1+t_1^2}{2}=k^2 \frac {1-2t_2+t_2^2}{2}, \quad \frac{t_1^2}{2}=k^2 \frac{t_2^2}{2},
$$
which implies that $k^2=1$ and $t_1=t_2$; concluding the proof of the proposition.
\end{proof}

\begin{remark}
Let $R_\vp$ denote the scalar curvature of $\vp$, i.e.\ $R_\vp=\tr \Ricci_\vp$. The number $\frac{R_\vp^2}{|\Ricci_\vp|^2}$ is therefore an invariant up to
isometry or scaling. For $(\ngo_3(b+c,b,c),\vp_3)$,  $\frac{R_{b,c}^2}{|\Ricci_{b,c}|^2}=\frac{1}{2}$ for all $b,c\in\RR^*$, so it can not be used to prove
non-homothety.
\end{remark}

It follows from \eqref{lot-wei} that $Q_{\vp_3}=\frac{a^2+b^2+c^2}{6}\Diag(-2,-2,-2,1,1,1,1)$. Note that this coincides with $-\frac{1}{3}\lambda I-D$
above.

We can now proceed to the proof of part (iii) for $\ngo_4$. Let $\ngo_4=\ngo_4(a,b,c,d)$ be the $7$-dimensional nilpotent Lie algebra with basis
$\{e_1,\dots,e_7\}$ and Lie bracket given by
$$
[e_1,e_2]=-a e_3,\hspace{0.4cm}[e_1,e_3]=-b e_6,\hspace{0.4cm} [e_2,e_4]=-c e_6, \hspace{0.4cm}[e_1,e_5]=-d e_7 \qquad a, b, c, d \in\RR^* ,
$$
or equivalently,
$$
de^3=ae^{12},\hspace{0.4cm}de^6=be^{13}+ce^{24},\hspace{0.4cm} de^7=de^{15} \qquad a, b, c, d \in\RR^*.
$$
We have a linear isomorphism that carries $\ngo_4(1,1,1,1)$ into $\ngo_4(a,b,c,d)$, whose matrix is $\Diag(1,1,a,ab/c,1,ab,d)$. From now on $\ngo_4$
denotes $\ngo_4(a,b,c,d)$.

We consider the $3$-form
$$
\vp_4=-e^{124}-e^{456}+e^{347}+e^{135}+e^{167}+e^{257}-e^{236}\in\Lambda^3\ngo_4^*.
$$
Let $h_4\in\Gl_7(\RR)$ be the permutation $(1,-6,3,4,5,2,7)$, then $h_4 \cdot \vp_4 = \vp_0$, which implies that $\vp_4$ is positive.

\begin{lemma} \label{prop-n4} If $a,b,c,d\in\RR^*$ and $\ngo_4(a,b,c,d)$ is
as above, then
\begin{itemize}
\item[(i)] $\vp_4$ is closed if and only if $a=c$ and $b=d$.
\item[(ii)] If $a^2=2b^2$, then $(\ngo_4(a,b,a,b),\vp_3)$ is a semi-algebraic soliton.
\end{itemize}
\end{lemma}

\begin{proof}
It is easy to see that $d\vp_4=(a-c)e^{1247}+(d-b)e^{1345}$, so $\vp_4$ is closed if and only if $a=c$ and $b=d$. Assuming $\vp_4$ to be closed we proceed
to compute the Laplacian $\Delta_{\vp_4}\vp_4$:
\begin{align*}
\ast\vp_4 &= e^{3567}+e^{1237}+e^{1256}-e^{2467}+e^{2345}+e^{1346}+e^{1457},\\
d\ast\vp_4 &= ae^{12567}-ce^{23457}+be^{12347}+de^{12456},\\
\ast  d\ast \vp_4 &= ae^{34}-ce^{16}+be^{56}-de^{37},\\
d\ast d\ast\vp_4 &= (a^2+c^2)e^{124}-(b^2+d^2)e^{135}-bce^{245}-ade^{127}.
\end{align*}

Replacing in the condition $a=c$ and $b=d$, we obtain $\Delta_{\vp_4}\vp_4= -2a^2 e^{124}+2b^2 e^{135}+abe^{245}+abe^{127}$.

To prove that $(\ngo_4,\vp_4)$ is a semi-algebraic soliton, we have to find some $\lambda\in\RR$ and $D\in\Der(\ngo_4)$ such that
$\Delta_{\vp_4}\vp_4=\lambda\vp_4+\lca_{X_D}\vp_4$. We propose
$$
D:= \left[\begin{smallmatrix}
-b^2 & 0 & 0 & 0 & 0 & 0 & 0\\
0 & -2b^2 & 0 & 0 & 0 & 0 & 0 \\
0 & 0 & -3b^2 & 0 & 0 & 0 & 0 \\
0 & 0 & 0 & -2b^2 & 0 & 0 & 0 \\
-ab  & 0 & 0 & 0 & -3b^2 & 0 & 0\\
0 & 0 & 0 & 0 & 0 & -4b^2& 0\\
0 & 0 & 0 & -ab & 0 & 0 & -4b^2
\end{smallmatrix}\right]
$$
and $\lambda=9b^2$. Then the Lie derivative equals
\begin{align*}
\lca_{X_D}\vp_4 =
& 5 b^2 e^{124}+9b^2e^{456}-abe^{146}-9b^2e^{347}-7b^2e^{135}-9b^2e^{167}\\
&+ abe^{146}-9b^2e^{257}+abe^{127}+abe^{245}+9b^2e^{236}.
\end{align*}
The soliton equation holds if $a^2=2b^2$, i.e.\ if $a^2=2b^2$ then
$$
\lca_{X_D}\vp_4+9b^2\vp_4=-4b^2e^{124}+2b^2e^{135}+abe^{127}+abe^{245}=\Delta_{\vp_4}\vp_4.
$$
Note that $(\ngo_4(a,b,a,b),\vp_4)$ is not an algebraic soliton. Indeed, $D^t\not\in \Der(\ngo_4)$ since $[D^te_2,e_7]+[e_2,D^te_7]=-ab[e_2,e_4]=abce_6\ne
0=D^t[e_2,e_7]$.
\end{proof}

\begin{remark}
For every $a,b\in\RR^*$ such that $a^2=2b^2$, $(\ngo_4(a,b,a,b),\vp_4)$ is an expanding semi-algebraic soliton since $\lambda>0$.
\end{remark}

On the other hand, we are interested in computing $Q_{\vp_4}$. It is not hard to see that $ \Ricci_{\vp_4}=\Diag
\left(-\frac{a^2+b^2+d^2}{2},-\frac{a^2+c^2}{2},\frac{a^2-b^2}{2} ,-\frac{c^2}{2} ,-\frac{d^2}{2} ,\frac{b^2+c^2}{2} ,\frac{d^2}{2}\right)$ and
$\tau_{\vp_4}= -ae^{34}+ce^{16}-be^{56}+de^{37}$. It follows from \eqref{lot-wei} that
$$
Q_{\vp_4}=\left[\begin{smallmatrix}
-\frac{\alpha}{3}&0&0&0&\frac{bc}{2}&0&0\\
0&\frac{\alpha-3a^2-3c^2}{6}&0&0&0&0&0\\
0&0&\frac{\alpha-3b^2-3d^2}{6}&0&0&0&0\\
0&0&0&\frac{\alpha-3a^2-3c^2 }{6}&0&0&\frac{ad}{2}\\
\frac{bc}{2}&0&0&0&\frac{\alpha -3b^2-3d^2}{6}&0&0\\
0&0&0&0&0&\frac{\alpha}{6}&0\\
0&0&0&\frac{ad}{2}&0&0&\frac{\alpha}{6}\\
\end{smallmatrix}\right],
$$
where $\alpha=a^2+b^2+c^2+d^2$. Thus, $Q_{\vp_4}=-\frac{1}{3}\lambda I-D$, where $\lambda$ and $D$ are as above.

The remaining cases are analogous and the following lemmas provide information about them.

\begin{lemma}If $a,b\in\RR^*$, $\ngo_2(a,b)$ is the Lie algebra with Lie bracket $[e_1,e_2]=-ae_5$, $[e_1,e_3]=-b e_6$ and $\vp_2:= e^{147}+e^{267}+e^{357} +e^{123}+e^{156}+e^{245}-e^{346}$, then
\begin{itemize}
\item[(i)]  $\vp_2$ is closed if and only if $a=b$.
\item[(ii)] $(\ngo_2(a,a),\vp_2)$ is an algebraic soliton.
\end{itemize}
\end{lemma}

\begin{lemma} If $\ngo_5(a,b,c,d)$ is the Lie algebra with Lie bracket given by
$$
[e_1,e_2]=-ae_3, \quad [e_1,e_3]=-be_6, \quad [e_1,e_4]=-ce_7,\quad [e_2,e_5]=-de_7,
$$
where $a,b,c,d\in\RR^*$ and $\vp_5:=e^{134}+e^{457}-e^{246}-e^{125}-e^{356}+e^{167}-e^{237}$, then
\begin{itemize}
\item[(i)] $\vp_5$ is closed if and only if $a=d$ and $b=c$.
\item[(ii)] If $a^2=2 b^2$, then $(\ngo_5(a,b,b,a),\vp_5)$ is a semi-algebraic soliton.
\end{itemize}
\end{lemma}

\begin{lemma}If $a,b,c,d\in\RR^*$, $\ngo_6(a,b,c,d)$ is the Lie algebra with Lie bracket given by
$$
[e_1,e_2]=-ae_4,\quad  [e_1,e_3]=-be_5, \quad [e_1,e_4]=-ce_6,\quad  [e_1,e_5]=-de_7
$$
and $\vp_6:=e^{123}+e^{347}+e^{356}+ e^{145}-e^{246}+e^{167}+e^{257}$, then
\begin{itemize}
\item[(i)] $\vp_6$ is closed if and only if $a=b$ and $c=d$.
\item[(ii)] If $a^2=2 c^2$, then  $(\ngo_6(a,a,c,c),\vp_6)$ is a semi-algebraic soliton.
\end{itemize}
\end{lemma}

\begin{lemma} If $\ngo_7(a,b,c,d,e)$  is the Lie algebra with Lie bracket given by
$$
[e_1,e_2]=-ae_4, \quad [e_1,e_7]=-b e_6, \quad[e_2,e_7]=-c e_5, [e_5,e_7]=-d e_3,\quad [e_6,e_7]=-e e_4,
$$
where $a,b,c,d,e\in \RR^*$ and $\vp_7:=e^{127}+e^{135}-e^{146}-e^{236}-e^{245}+e^{347}+e^{567}$, then
\begin{itemize}
\item[(i)] $\vp_7$ is closed if and only if $a=-b-c$ and $d=e$.
\item[(ii)]If $e^2 =\frac{b^2+c^2+bc}{2}$, then $(\ggo_7(b+c,b,c,e,e),\vp_7)$ is a semi-algebraic soliton.
\end{itemize}
\end{lemma}
The Lie bracket given in the above lemma is isomorphic to the one given by Table \ref{nl}, the isomorphism is given by:
$$
\left[\begin{smallmatrix}
0 & 1 & 0 & 0 & 0 & 0 & 0\\
0 & 0 & be/a & 0 & 0 & 0 & 0\\
0 & bcde/a & 0 & 0 & 0 & 0 &  bcde/a\\
0 & 0 & -be & 0 & 0 & be & 0\\
0 & 0 & 0 & 0 & - bce/a & 0 & 0\\
0 & 0 & 0 & -b & 0 & 0 & 0\\
1 & 0 & 0 & 0 & 0 & 0 & 0
\end{smallmatrix}\right].
$$
This concludes the proof of the theorem.
\end{proof}

The following tables provide information about the solitons found for the Lie algebras $\ngo_2,\dots,\ngo_7$. For any Lie algebra $\ngo_i$, the $3$-form
$\vp$, the number $\lambda$ and the derivation $D$ given in the tables are such that
$$
\Delta_\vp \vp=\lca_{X_D}\vp+\lambda \vp.
$$
Note that for each $i$, the given $\lambda$ is always positive. This implies that all the Laplacian solitons are expanding.

{\small \begin{table}
\renewcommand{\arraystretch}{1.6}
$$
\begin{array}{|c|c|c|}\hline
  & \ngo_2 & \ngo_3 \\
\hline\hline
\textbf{$\lb$} & [e_1,e_2]=-e_5,\hspace{0.1cm} [e_1,e_3]=-e_6.\hspace{0.1cm} & \begin{array}{l} [e_1,e_2]=-e_4,\hspace{0.1cm} [e_1,e_3]=(c-1)e_5,\;\ \\ \;\ [e_2,e_3]=-ce_6, \quad 0< c <   1/2. \end{array} \\
\hline
\textbf{$\vp$} & \begin{array}{l} e^{147}+e^{267}+e^{357} \;\ \\ \;\ +e^{123}+e^{156}+e^{245}-e^{346} \end{array} & \begin{array}{l} e^{123}+e^{145}+e^{167} \;\ \\ \;\ +e^{246}-e^{257}-e^{347}-e^{356} \end{array} \\
\hline \textbf{$\tau_\vp$} &  - e^{35}+ e^{26} &-ce^{16}+(1-c)e^{25}-e^{34} \\
\hline \textbf{$\Delta_\vp\vp$} & 2 e^{123} & 2 (1-c+c^2)  e^{123} \\
\hline\textbf{$\Ricci_\vp$}& -\Diag\left(1, \frac{1}{2}, \frac{1}{2}, 0, -\frac{1}{2}, -\frac{1}{2}, 0\right)&   \begin{array}{l}\frac{1}{2}\Diag(-2+2c-c^2, -1-c^2,\;\ \\ \;\  -1+2c-2c^2, 1, (-1+c)^2, c^2 , 0)\end{array}\\
\hline\textbf{$R_\vp$} & -1 &  -1+c-c^2\\
\hline \textbf{$\frac{R_\vp{^2}}{tr \Ricci_\vp^2}$} & \frac{1}{2} & \frac{1}{2}\\
\hline
\textbf{$\lambda$}  & 5 & 5 (1-c+c^2)   \\
\hline \textbf{$D$} & -\Diag(1,1,1,2,2,2,2) & -(1-c+c^2)\Diag(1,1,1,2,2,2,2) \\
\hline
\end{array}
$$
\caption{Laplacian solitons.}\label{n2-n3}
\end{table}}

{\small \begin{table}
\renewcommand{\arraystretch}{1.6}
$$
\begin{array}{|c|c|c|}\hline
     &   \ngo_4 & \ngo_5 \\
\hline\hline
\textbf{$\lb$} & \begin{array}{l}[e_1,e_2]=-\sqrt{2}e_3,\hspace{0.1cm}  [e_1,e_3]=-e_6,\;\ \\ \;\ \hspace{0.1cm}[e_2,e_4]=-\sqrt{2}e_6,\hspace{0.1cm} [e_1,e_5]=-e_7. \end{array} & \begin{array}{c}  [e_1,e_2]=-\sqrt{2}e_3, \hspace{0.1cm} [e_1,e_3]=-e_6, \;\ \\ \;\ \hspace{0.1cm} [e_1,e_4]=-e_7 , \hspace{0.1cm}[e_2,e_5]= -\sqrt{2}e_7 .\end{array} \\
\hline
\textbf{$\vp$} & \begin{array}{l} -e^{124}-e^{456}+e^{347} \;\ \\ \;\ +e^{135}+e^{167}+e^{257}-e^{236} \end{array} & \begin{array}{l} e^{134}+e^{457}-e^{246} \;\ \\ \;\ -e^{125}-e^{356}+e^{167}-e^{237} \end{array} \\
\hline \textbf{$\tau_\vp$} &-\sqrt{2}e^{34}+\sqrt{2}e^{16}-e^{56}+e^{37} &  - e^{46}+ e^{37}-\sqrt{2}e^{35}+\sqrt{2}e^{17}\\
\hline \textbf{$\Delta_\vp\vp$} & -4 e^{124} + 2 e^{135} + \sqrt{2} e^{245} + \sqrt{2} e^{127} & 2 e^{134} + \sqrt{2} e^{127} + \sqrt{2} e^{235} - 4 e^{125} \\
\hline\textbf{$\Ricci_\vp$} & \Diag\left(-2,-2, \frac{1}{2} ,-1 ,-\frac{1}{2} , \frac{3}{2} , \frac{1}{2} \right)  &   \Diag\left(-2, -2,\frac{1}{2},-\frac{1}{2},-1,\frac{1}{2},\frac{3}{2}\right)\\
\hline\textbf{$R_\vp$} & -3&  -3\\
\hline \textbf{$\frac{R_\vp{^2}}{tr \Ricci_\vp^2}$} & \frac{3}{4} & \frac{3}{4} \\
\hline
\textbf{$\lambda$}   & 9 & 9  \\
\hline \textbf{$D$}
                    &\left[\begin{smallmatrix}-1 & 0  & 0 & 0  & 0 & 0 & 0 \\0  & -2 & 0 & 0  & 0 & 0 & 0\\0  & 0  & -3 & 0  & 0 & 0 & 0\\0  & 0& 0 & -2 & 0 & 0 & 0\\ -\sqrt{2} & 0  &0& 0  & -3 & 0 & 0\\0  & 0  & 0 & 0  & 0 & -4 & 0\\0  & 0  & 0&-\sqrt{2} & 0 & 0 & -4\\\end{smallmatrix} \right]
                     &\left[\begin{smallmatrix}-1 & 0  & 0 & 0  & 0 & 0 & 0 \\0  & -2 & 0 & 0  & 0 & 0 & 0\\\sqrt{2}  & 0  & -3 & 0  & 0 & 0 & 0\\0  & 0  & 0 & -3 & 0 & 0 & 0 \\0  & 0  & 0 & 0 & -2 & 0 & \\0  & 0  & 0 & 0  & 0 & -4 & 0\\ 0  & 0  & 0 & 0 & -\sqrt{2} &  0 & -4\\\end{smallmatrix} \right]
                    \\
\hline
\end{array}
$$
\caption{Laplacian solitons.} \label{n4-n5}
\end{table}}

{\small \begin{table}
\renewcommand{\arraystretch}{1.6}
$$
\begin{array}{|c|c|c|}\hline
   &\ngo_6 & \ngo_7 \\
\hline\hline
\textbf{$\lb$}  & \begin{array}{c} [e_1,e_2]=-\sqrt{2}e_4,\hspace{0.1cm} [e_1,e_3]=-\sqrt{2}e_5 \;\ \\ \;\  [e_1,e_4]=-e_6,\hspace{0.1cm} [e_1,e_5]=-e_7. \end{array} &   \begin{array}{c} [e_1,e_2]=4e_4, \hspace{0.1cm} [e_1,e_7]=-2e_6, \;\ \\ \;\ [e_2,e_7]=-2e_5,\hspace{0.1cm} [e_5,e_7]=-\sqrt{6}e_5, \;\ \\ \;\ \hspace{0.1cm}[e_6,e_7]=-\sqrt{6}e_4 . \end{array}  \\
\hline
\textbf{$\vp$} & \begin{array}{c} e^{123}+e^{347}+e^{356}\;\ \\ \;\ +  e^{145}-e^{246}+e^{167}+e^{257} \end{array} & \begin{array}{c} e^{127}+e^{135}-e^{146} \;\ \\ \;\ -e^{236}-e^{245}+e^{347}+e^{567} \end{array} \\
\hline \textbf{$\tau_\vp$}   &  -\sqrt{2}e^{34}+\sqrt{2}e^{25}- e^{56}+ e^{47} & -2e^{15}+2e^{26}- \sqrt{6}e^{36}+ \sqrt{6}e^{45}-4e^{47} \\
\hline \textbf{$\Delta_\vp\vp$} & 4 e^{123} - \sqrt{2} e^{136} + \sqrt{2} e^{127} + 2 e^{145} &  \begin{array}{c}  24e^{127}-4\sqrt{6} e^{125} - 2\sqrt{6} e^{137} \;\ \\ \;\  +2\sqrt{6}e^{247} +12 e^{567} \end{array}\\
\hline \textbf{$\Ricci_\vp$} &  \Diag\left(-3, -1, -1, \frac{1}{2}, \frac{1}{2}, \frac{1}{2} , \frac{1}{2}\right)& \Diag\left(-10,-10,3,11,-1,-1,-10\right)\\
\hline\textbf{$R_\vp$} & -3&  -18\\
\hline \textbf{$\frac{R_\vp{^2}}{tr \Ricci_\vp^2}$} & \frac{3}{4} & \frac{3}{4} \\
\hline
\textbf{$\lambda$}   & 9 & 54 \\
\hline \textbf{$D$}
                   &\left[\begin{smallmatrix}-1 & 0  & 0 & 0  & 0 & 0 & 0 \\0  & -2 & 0 & 0  & 0 & 0 & 0\\0  & 0  & -2 & 0  & 0 & 0 & 0\\0  & 0  & 0 & -3 & 0 & 0 & 0 \\0  & 0  & 0 & 0 & -3 & 0 & 0\\0  & \sqrt{2}  & 0 & 0  & 0 & -4 & 0\\0  & 0  & \sqrt{2} & 0 & 0  &  0 & -4\\\end{smallmatrix} \right]
                    &\left[\begin{smallmatrix}-12 & 0  & 0 & 0  & 0 & 0 & 0 \\0  & -12  & 0 & 0  & 0 & 0 & 0 \\0  & -2\sqrt{6}  &-24  & 0  & 0 & 0 & 0 \\ -2\sqrt{6}  & 0  & 0 & -24   & 0 & 0 & 0 \\ 0  & 0  & 0 & 0  & -18 & 0 & -4\sqrt{6} \\ 0  & 0  & 0 & 0  & 0 & -18 & 0 \\ 0  & 0  & 0 & 0  & 0 & 0 & -6\\ \end{smallmatrix} \right] \\
\hline
\end{array}
$$
\caption{Laplacian solitons.}\label{n6-n7}
\end{table}}

Using  \eqref{lot-wei}, we computed $Q_\vp$ for any $\ngo_i$ with $i=2,\dots,7$:

$$Q_{\vp_2} = \frac{1}{3}\Diag(-2,-2,-2,1,1,1,1), \qquad Q_{\vp_3} = \frac{1-c+c^2}{3} \Diag(-2,-2,-2,1,1,1,1),$$
$$Q_{\vp_4} = \left[\begin{smallmatrix}-2 & 0  & 0 & 0 & \tfrac{1}{\sqrt{2}} & 0 & 0 \\ 0  & -1 & 0 & 0  & 0 & 0 & 0\\0  & 0 & 0 & 0  & 0 & 0 & 0\\0  & 0& 0& -1 & 0 & 0 & \tfrac{1}{\sqrt{2}}\\ \tfrac{1}{\sqrt{2}} & 0  &0& 0  & 0 & 0 & 0\\ 0 & 0  & 0 & 0  & 0 & 1 & 0\\0  & 0  & 0&\tfrac{1}{\sqrt{2}} & 0 & 0&1\\\end{smallmatrix}\right], \qquad Q_{\vp_5} = \left[\begin{smallmatrix}-2 & 0  &-\tfrac{1}{\sqrt{2}} & 0  & 0 & 0 & 0 \\0  & -1 & 0 & 0  & 0 & 0 &0\\-\tfrac{1}{\sqrt{2}} & 0 & 0 & 0 & 0 &0 & 0\\0  &0  & 0 & 0 &0 & 0 & 0 \\0  & 0  & 0 & 0 & -1 & 0 & \tfrac{1}{\sqrt{2}}\\0  & 0 &0 & 0  & 0 & 1 & 0\\ 0  & 0  & 0 & 0 & \tfrac{1}{\sqrt{2}} & 0&1\\ \end{smallmatrix}
\right],$$
$$Q_{\vp_6} = \left[\begin{smallmatrix}-2 & 0  & 0 & 0  &  -\frac{1}{\sqrt{2}} & 0 & 0 \\0  & -1 & 0 & 0  & 0 & 0 & 0\\0 & 0 & -1 & 0 &0 & 0 &  -\frac{1}{\sqrt{2}}\\0  & 0  & 0 & 0 & 0 & 0 & 0 \\0  & 0  & 0 & 0 & 0 & 0 & 0\\0  &  -\frac{1}{\sqrt{2}}  & 0 & 0  & 0 & 1 & 0\\0  & 0  &  -\frac{1}{\sqrt{2}} & 0 & 0  & 0 &1\\\end{smallmatrix} \right], \qquad Q_{\vp_7} = \left[\begin{smallmatrix} -4  & 0  & 0 & 0  & -1 & 0 & 0 \\0  &  -4  & 0 & 0  & 0 & 1 & 0 \\0  & 0  & 9  & 0  & 0 & -\sqrt{6}/2 & 0 \\  0  & 0  & 0 & 17  & \sqrt{6}/2 & 0 & -2 \\ 1  & 0  & 0 & -\sqrt{6}/2  & 5 & 0 & 0 \\ 0  & -1  & \sqrt{6}/2 & 0  & 0 & 5 & 0 \\ 0  & 0  & 0 & 2  & 0& 0 & -4 \\ \end{smallmatrix}
\right].$$

Note that in every case, we have that $Q_\vp=-\frac{1}{3}\lambda I -D$, where $\lambda$ and $D$ are given in the tables above.

\proof[Acknowledgements]  I would like to express my sincere gratitude to my Ph.D. advisor Prof. Jorge Lauret for the continuous support during the writing of this paper, for his patience and motivation.


\begin{thebibliography}{MMM}

\bibitem[B]{Bry} {\sc R. Bryant}, Some remarks on $G_2$-structures, \emph{Proc. Gökova Geometry-Topology Conference} (2005), 75-109.

\bibitem[CF]{CF} {\sc D. Conti, M. Fern\'andez}, Nilmanifolds with a calibrated $G_2$-structure, {\it Diff. Geom. Appl.} {\bf 29} (2011), 493-506.

\bibitem[FFM]{FFM} {\sc M. Fern\'andez, A. Fino, V. Manero}, Laplacian flow of closed $G_2$-structures inducing nilsolitons,  {\it J. Geom. Anal.} {\bf 26} (2016), 1808-1837.

\bibitem[FC]{FC} {\sc E. Fern\'andez-Culma}, Classification of $7$-dimensional Einstein nilradicals, {\it Transform. Groups}, \textbf{17}(2012), 639-656.

\bibitem[J]{J} {\sc M. Jablonski}, Homogeneous Ricci solitons are algebraic,  {\it Geom. Topol.} {\bf 18} (2014), 2477-2486.

\bibitem[L1] {L1} {\sc J. Lauret},  Ricci soliton homogeneous nilmanifolds, \emph {Math. Ann.} \textbf{319} (2001), 715-733.

\bibitem[L2] {L2} {\sc J. Lauret}, Laplacian flow of homogeneous $G_2$-structures and its solitons, preprint 2016 (arXiv).

\bibitem[LoW]{Lty} {\sc J. Lotay, Y. Wei}, Laplacian flow for closed $G_2$-structures: Shi-type estimates, uniqueness and compactness, preprint 2015 (arXiv).

\end{thebibliography}
\end{document}